\newcommand{\cp}{\rm Cap}
\newcommand{\bv}{\rm BV}
\newcommand{\dg}{\rm DG}
\newtheorem{thm}{Theorem}
\newtheorem{prop}{Proposition}
\newtheorem{cor}{Corollary}
\newtheorem{defn}{Definition}
\title {An extension the semidefinite programming bound for spherical codes}
\author {Oleg R. Musin}
\begin{document}
\date{}
\maketitle

\begin{abstract}    In this paper we  present an  extension of known semidefinite and linear programming upper bounds for spherical codes and consider a version of this bound for distance graphs. We apply the main result for the distance distribution of a spherical code.  %farthest--distance graphs and discuss the application of this method to the strong kissing problem in four dimensions. 
\end {abstract}

%\medskip

%\noindent {\bf Keywords:} circle action, fixed points, rigid multiplicative genus

\medskip

\section{Introduction}

Let  $C$ be an $N$--element subset of the unit sphere ${\mathbb S}^{n-1}\subset {\mathbb R}^{n}$. We say that $C$  is an $(N,n,\theta)$ {\em spherical code}  if the angular distance between every two distinct points of $C$ is at least $\theta$, in other words,  if every distinct points $(c,c')$ of $C$ have inner product $c\cdot c'$ at most $t:=\cos{\theta}$.  %Denote by $\spc(N,n,\theta)$ the set of all $(N,n,\theta)$ spherical codes. 

%spherical $\psi$-code if the angular distance between every two distinct points of $C$ is at least $\psi$, in other words, if every distinct points $(c,c')$ of $C$ have inner product $c\cdot c'$ at most $\cos{\psi}$. 
%Denote by $A(n,\psi)$  the maximal size of a $\psi$-code in ${\mathbb S}^{n-1}$.

Let $f(x)=\sum{f_kG_k^{(n)}(x)}$ be a non-negative linear combination of the Gegenbauer polynomials $G_k^{(n)}$ with $f_0>0$  such that  $f(x)\le 0$  for all $x\in [-1,t]$.  Then  for every $(N,n,\theta)$ spherical code we have: 
$$N \le \frac {f(1)}{f_0}. \eqno (1)$$
 Denote by $A(n,\theta)$  the maximal size $N$ of an $(N,n,\theta)$ spherical code.  Then (1) is equivalent to the following bound: 
 $$A(n,\theta)\le \frac {f(1)}{f_0}. $$
This bound  is called the {\em linear programming (LP)} or {\em Delsarte's bound} for spherical codes. 

\medskip

 The spherical cap with center $e\in {\mathbb S}^{n-1}$ and angular radius $\phi$ is the set 
 $$
 \cp(e,\phi):=\{x\in {\mathbb S}^{n-1}: e\cdot x\ge\cos{\phi}\}. 
 $$        
The maximal size of an $(N,n,\theta)$ spherical code in $ \cp(e,\phi)$  is denoted by $A(n,\theta,\phi)$ \cite{BM}.  

%def

\medskip
 
  Let $f$ be a real function on the interval $[-1,-\cos{\phi}]$.  Let  $m\le A(n,\theta,\phi)$. Denote by $Q(m,n,\theta,\phi)$ the set of all $(m,n,\theta)$ spherical codes in $ \cp(e,\phi)$.  
  
Let $Y =\{y_1,\ldots,y_m\}\in Q(m,n,\theta,\phi)$, 
 $$
H_f(Y):=f(-e\cdot y_1)+\ldots+f(-e\cdot y_m),
$$ 
$$
h_m=h_m(n,\theta,\phi,f):=\sup\limits_{Y\in Q(m,n,\theta,\phi)}\{H_f(Y)\}, 
$$
$$
\hat h(n,\theta,\phi,f):=\max\{h_1,\ldots,h_\mu\}), \quad \mu:=A(n,\theta,\phi). 
$$

\medskip

  In \cite{mus08a} we found an extension of Delsarte's bound (1). Let $f$ be a non-negative linear combination of the Gegenbauer polynomials $G_k^{(n)}$ with $f_0>0$  such that  $f(x)\le 0$  for all $x\in [-\cos{\phi},\cos{\theta}]$. Then  for every $(N,n,\theta)$ spherical code \cite[Theorem 1]{mus08a} states that 
 $$
 N \le \frac {f(1)+\hat h(n,\theta,\phi,f)}{f_0}. \eqno (2)
 $$
 
 In \cite{mus08a} we applied this bound to prove that the kissing number in four dimensions is 24. Namely, we found $f$ with $f_0=1$ and $\phi$ such that 
 $$
 f(1)+\hat h(4,\pi/3,\phi,f)<25. 
 $$
(The proof of this inequality is the most difficult part of \cite{mus08a}.) Then from (2) follows that $k(4):=A(4,\pi/3)<25$. Since $k(4)\ge24$, we have that $k(4)=24$. 
 
 \medskip
  
The semidefinite programming (SDP) method for spherical codes was proposed by Bachoc and Vallentin \cite{bac08a} with further applications and extensions in \cite{bac09a,bac09b, cohn12, mac16,mv10,mus14}.

The positive--semidefinite property of the Gegenbauer polynomials yields  the positive--semidefinite property of matrices $S_k^n$. Now  consider polynomials $F$ that were defined by Bachoc and Vallentin. Let $F (x, y, z)$ be a symmetric polynomial with expansion
$$
F (x, y, z) =\sum\limits_{k=0}^d {\langle M_k, S^n_k (x, y, z)\rangle}
$$
in terms of the matrices $S^n_k$. 

\begin{defn}
For a given $f_0>0$ denote by $\bv(n,f_0)$ the class of symmetric polynomials  $F(x,y.z)$ that satisfy the following properties: 
\begin{enumerate}
\item all matrices $M_k$ are positive semidefinite,
\item $M_0 - f_0E_0$ is positive semidefinite ($E_0$ is the matrix whose only nonzero entry is the top left corner which
contains 1). 
\end{enumerate}

\end{defn}

Let 
 $$
D(\theta):=\left\{(x,y,z): -1\le x, y, z\le  \cos{\theta}, \, 1+2xyz-x^2-y^2-z^2\ge0\right\}.
 $$
{  Let (1) $F\in \bv(n,f_0)$,  (2) $F (x, x, 1) \le B$ for all $x \in [-1,  \cos{\theta}]$, and (3) $F (x, y, z)\le 0$ for all $(x, y, z) \in D(\theta)$.
Then (see \cite{bac08a})  for every $(N,n,\theta)$ spherical code we have: }
$$
N^2\le \frac{F(1,1,1)+3(N-1)B}{f_0}. \eqno (3)
$$

 \medskip

 %%%%%%%%%%%%%%%%%%%%%%%%

 \section{New SDP bound for spherical codes}
 
 In this section we prove a theorem that is an SDP generalization of  \cite[Theorem 1]{mus08a} and an extension of \cite[Theorem 4.1]{bac09a}.

\begin{defn}
 Let  $T$ be a subset of the interval  $[-1,1)$. Let $e\in{\mathbb S}^{n-1}$. 
Denote by  $A(n,\theta,T)$ the maximal size  of an $(N,n,\theta)$ spherical codes $C$ such that  for every $c\in C$ the inner product $c\cdot e$ belongs to $T$.  
 \end{defn}

  \begin{defn}
 Let  $g$ be a real function on  $T\subset[-1,1)$. Define 
 $$ g_T(x):=\left\{
\begin{array}{l}
g(x) \; \mbox{ for all } x\in T \\
0 \;  \mbox{ otherwise}
\end{array} 
\right.
$$
 \end{defn}

 \begin{defn}
 Let $g$ be a real function on  $T\subset[-1,1)$. Let  $m\le A(n,\theta,T)$. Let $e\in{\mathbb S}^{n-1}$.  Denote by $Q(m,n,\theta,T)$ the set of all $(m,n,\theta)$ spherical codes $C$ such that for every $c\in C$ the inner product $c\cdot e$ belongs to $T$.  
  
Let $Y =\{y_1,\ldots,y_m\}\in Q(m,n,\theta,T)$, 
 $$
H_g(Y):=g(e\cdot y_1)+\ldots+g(e\cdot y_m),
$$ 
$$
h_m=h_m(n,\theta,T,g):=\sup\limits_{Y\in Q(m,n,T)}\{H_g(Y)\}, 
$$
$$
\hat h(n,\theta,T,g):=\max\{h_1,\ldots,h_\mu\}, \quad \mu:=A(n,\theta,T). 
$$
 \end{defn}

\begin{defn}
For given $n,\, f_0$, $T\subset[-1,1)$, $g:T\to {\mathbb R}$, $B$ and $\theta$ denote by ${\mathcal F}(n,f_0,T,g,B,\theta)$ the class of symmetric polynomials  $F(x,y.z)$ that satisfy the following properties: 
\begin{enumerate}
\item $F\in \bv(n,f_0)$,
\item $F (x, x, 1) \le B+2g_T(x)$ for all $x\in[-1,\cos{\theta}]$,  
\item $F (x, y, z)\le g_T(x)+g_T(y)+g_T(z)$ for all $(x, y, z) \in  D(\theta)$. 
\end{enumerate}

\end{defn}

\begin{thm}   \label{thm1}   Let $F\in {\mathcal F}(n,f_0,T,g,B,\theta)$. Then an $(N,n,\theta$) spherical code satisfies the following inequality
$$
N^2\le \frac{F(1,1,1)+3(N-1)\,B+3N\,\hat h(n,\theta,T,g)}{f_0}. \eqno (4)
$$
\end{thm}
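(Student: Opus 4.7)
My plan is to follow the three-point SDP framework of Bachoc--Vallentin that produced~(3) and to modify the bookkeeping so that the extra terms $2g_T(x)$ in condition~(2) and $g_T(x)+g_T(y)+g_T(z)$ in condition~(3) collapse to the single summand $3N\hat h$ on the right-hand side of~(4).

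Let $C=\{c_1,\dots,c_N\}$ be an $(N,n,\theta)$ spherical code and consider the triple sum
$$
\Sigma_F:=\sum_{i,j,k=1}^{N} F(c_i\cdot c_j,\,c_i\cdot c_k,\,c_j\cdot c_k).
$$
Since $F\in\bv(n,f_0)$, the argument used in the derivation of~(3) in \cite{bac08a} (expand $F=\sum_k\langle M_k,S_k^n\rangle$, use positive semidefiniteness of the matrices $M_k$ together with positive semidefiniteness of the kernel sums $\sum_{i,j,k} S_k^n(c_i\cdot c_j,c_i\cdot c_k,c_j\cdot c_k)$, and isolate the $k=0$ contribution via $M_0-f_0E_0\succeq 0$) yields the lower bound $\Sigma_F\ge f_0 N^3$.

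For the upper bound I would partition the sum by the coincidence pattern of $(i,j,k)$. The diagonal $i=j=k$ contributes $NF(1,1,1)$. When exactly two of $i,j,k$ coincide the arguments take the form $(1,t,t)$ with $t\in[-1,\cos\theta]$, so by symmetry of $F$ and condition~(2) one has $F(1,t,t)\le B+2g_T(t)$; three such patterns occur. When $i,j,k$ are distinct the three pairwise inner products of $c_i,c_j,c_k$ lie in $D(\theta)$, so condition~(3) applies and bounds $F$ by $g_T(c_i\cdot c_j)+g_T(c_i\cdot c_k)+g_T(c_j\cdot c_k)$. The crucial step is then to control $\sum_{i\ne j} g_T(c_i\cdot c_j)$. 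Using that $\hat h$ is invariant under rotations of $e$, I would for each fixed $i$ take the pole to be $e:=c_i$; then $C_i:=\{c_j:j\ne i,\ c_i\cdot c_j\in T\}$ lies in $Q(|C_i|,n,\theta,T)$ with $|C_i|\le A(n,\theta,T)$, so $\sum_{j\ne i} g_T(c_i\cdot c_j)=H_g(C_i)\le \hat h$, and summing over $i$ gives $\sum_{i\ne j} g_T(c_i\cdot c_j)\le N\hat h$.

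Inserting this into the two-equal contribution (which becomes at most $3N(N-1)B+6N\hat h$) and into the all-distinct contribution (which, after collecting terms by symmetry, becomes at most $3(N-2)\sum_{i\ne j}g_T(c_i\cdot c_j)\le 3N(N-2)\hat h$) yields
$$
\Sigma_F\le NF(1,1,1)+3N(N-1)B+6N\hat h+3N(N-2)\hat h=NF(1,1,1)+3N(N-1)B+3N^2\hat h.
$$
Comparing with $\Sigma_F\ge f_0N^3$ and dividing by $N$ produces~(4). The main obstacle is not a deep inequality but the accounting: the factor $2$ in condition~(2) and the factors $1+1+1$ in condition~(3) are chosen exactly so that after summation with the correct multiplicities the $g_T$-contributions combine to precisely $3N\hat h$, and this is the only genuinely new feature relative to the proof of~(3).
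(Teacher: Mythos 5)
Your proposal is correct and follows essentially the same route as the paper's own proof: bound the triple sum $\Sigma_F$ below by $f_0N^3$ via the semidefinite conditions, split it by coincidence pattern, and control $\sum_{j\ne i}g_T(c_i\cdot c_j)\le\hat h$ by taking $e=c_i$ and invoking the definition of $h_m$ and $\hat h$. Your multiplicity accounting ($6N\hat h$ from the two-equal triples plus $3N(N-2)\hat h$ from the all-distinct ones, totalling $3N^2\hat h$) is just a more explicit version of the paper's count that each ordered pair occurs with weight $3N$, and it yields the identical final inequality.
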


\begin{proof} %To prove this theorem we use the proof of Theorem 4.1 in \cite{bac09a}.
Let $C$ be an $(n, N, \theta$) spherical code. Define 
$$
S:=\sum\limits_{(c,c',c'')\in C^3} {F(c\cdot c',c\cdot c'',c'\cdot c'')}.
$$
It is easy to see, that the positive semidefinite assumption on $F$ yields 
$$
S\ge N^3f_0. 
$$ 

On the other hand, the contribution of all triples $(c,c,c)$, $c\in C$,   to $S$ is $NF(1, 1, 1)$. Consider all triples with two pairwise different elements. Since $F(c,c,c')\le B+2g_T(c\cdot c')$, we see that the contribution  of pairs $(c,c)$ to $S$  is at most $3N(N-1)B$.

Let $(c,c')$, $c'\ne c$,  be an ordered pair. The number of all triples in $C^3$ that contains this pair is $3N$.  Consider in $C$ all points  $c_1,\ldots,c_m$ such that $c\cdot c_i\in T$.  Then 
$$
\sum\limits_{c'\in C,\, c'\ne c}{g_T(c\cdot c')}\le \sum\limits_{i=1}^m {g(c\cdot c_i)}\le h_m\le \hat h. 
$$
Hence the contribution  of all pairs $(c,c')$, $c'\ne c$, to $S$ is at most $3N^2\hat h$. 
Together,
$$
N^3f_0\le S \le NF(1, 1, 1)+3N(N-1)B+ 3N^2\,\hat h.
$$
\end{proof}

\medskip

Note that (4) extends (2) and (3).

(i)  Let $T=[-1,-\cos{\phi}]$. Now we have $\hat h(n,\theta,T,g)=\hat h(n,\theta,\phi,g)$. If 
$$F(x, y, z)=g(x)+g(y)+g(z), \quad B=g(1), \quad f(x)=3g(x),$$ 
then (4) becomes (2). 

(ii)  Let $g(x)=0$ for all $x$.  Then (4) becomes (3), i.e.  Theorem \ref{thm1} extends \cite[Theorem 4.1]{bac09a}.

 %First we show that from Theorem \ref{thm1} follows (3). Indeed,  

%%%%%%%%%%%%%

%%%%%%%%%%%%%

\section{Bounds for the distance distribution}

Let $C$ be an $(N,n,\theta$) spherical code. The {\em distance distribution} of $C$ with respect to $u\in C$ is the system of numbers $\{A_t(u): -1\le t \le 1\}$, where 
$$
A_t(u):=|\{v\in C: v\cdot u=t\}|,
$$ 
and the {\em distance distribution} of $C$ is the system of numbers $\{A_t: -1\le t \le 1\}$, where 
$$
A_t:=\frac{1}{N}\sum\limits_{u\in C}{A_t(u)}. 
$$ 

Let $s:=\cos{\theta}$. It is clear the $A_t$ satisfy $A_1=1$, $A_t=0$ for $s<t<1$, and 
$$\sum\limits_{-1\le t\le s}{A_t}=N-1. $$

 Let  $T \subset[-1,1]$. Denote 
 $$
 A(T):=\sum\limits_{t\in T: A_t>0}{A_t}. 
 $$
Then $A(\{1\})=A_1=1$, $A([-1,1])=N$ and $A([-1,s])=N-1$. 

\medskip

Now we apply Theorem \ref{thm1} for the distance distribution of a spherical code $C$. 

\begin{cor} \label{cor1}   Let $F\in {\mathcal F}(n,f_0,T,g,B,\theta)$. Suppose $T\subset[-1,\cos{\theta}]$ and $g(t)\le-a<0$ for all $t\in T$.  Then for every $(N,n,\theta)$ spherical code $C$  we have
$$
A(T)\le \frac{2}{N}\left\lfloor Q\right\rfloor, \quad Q:=\frac{F(1,1,1)+3(N-1)B-f_0N^2}{6a}. \eqno (5)
$$
In particular, if  $T=\{t\}$  and $Q<1$, then $A_t=0.$
\end{cor}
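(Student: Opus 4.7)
The plan is to mimic the proof of Theorem~\ref{thm1}, keeping the same auxiliary sum $S:=\sum_{(c,c',c'')\in C^3}F(c\cdot c',c\cdot c'',c'\cdot c'')$ and the same lower bound $S\ge N^3 f_0$ from positive semidefiniteness, but refining the upper bound on the $g_T$ contribution. Splitting $S$ according to the number of distinct entries in $(c,c',c'')$, the $N$ diagonal triples still contribute exactly $NF(1,1,1)$; by condition~(2) of the class $\mathcal{F}$, the $3N(N-1)$ triples with exactly two distinct entries contribute at most $3N(N-1)B+6\sum_{c\neq c'}g_T(c\cdot c')$; and by condition~(3) together with relabeling, the three-distinct triples contribute at most $3(N-2)\sum_{c\neq c'}g_T(c\cdot c')$.

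The decisive new input is the hypothesis $g(t)\le -a$ on $T$: since $g_T$ vanishes outside $T$,
$$
\sum_{c\neq c'}g_T(c\cdot c')\le -a\,\bigl|\{(c,c')\in C^2:\ c\neq c',\ c\cdot c'\in T\}\bigr|=-a\,N\,A(T),
$$
because $\sum_{u\in C}|\{v\in C\setminus\{u\}:\ u\cdot v\in T\}|=\sum_{t\in T}\sum_u A_t(u)=N\,A(T)$. Combining the three contributions, the total $g_T$-weight is at most $(6+3(N-2))\cdot(-a\,N\,A(T))=-3a\,N^2 A(T)$, so
$$
N^3 f_0\le S\le NF(1,1,1)+3N(N-1)B-3a\,N^2 A(T).
$$
Dividing by $N$ and rearranging yields $N\,A(T)\le 2Q$.

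The only subtle step, and the one I would flag as the main obstacle, is upgrading $N\,A(T)\le 2Q$ to the stated $N\,A(T)\le 2\lfloor Q\rfloor$; this does not follow merely from $N\,A(T)\in\mathbb{Z}_{\ge 0}$. The point is that $N\,A(T)$ is actually \emph{even}: it counts ordered pairs $(c,c')\in C^2$ with $c\neq c'$ and $c\cdot c'\in T$, and this relation is symmetric in its two arguments, so such ordered pairs occur in mutually reverse twins. Hence $N\,A(T)/2$ is a non-negative integer bounded by $Q$, giving $N\,A(T)/2\le\lfloor Q\rfloor$ and the claimed inequality $A(T)\le\frac{2}{N}\lfloor Q\rfloor$. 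The special case $T=\{t\}$ with $Q<1$ is then immediate, since $\lfloor Q\rfloor\le 0$ forces the non-negative quantity $A_t$ to vanish.
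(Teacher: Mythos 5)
Your proof is correct and is essentially the paper's argument: you bound the total $g_T$-contribution by $-a\,N\,A(T)$ using $g\le -a$ on $T$, deduce $N\,A(T)\le 2Q$, and then insert the floor via the observation that $N\,A(T)=2E(T)$ is twice a non-negative integer, exactly as the paper does. The only difference is presentational: the paper gets $A(T)\le 2Q/N$ by citing Theorem~\ref{thm1} together with the (loosely stated) estimate $\hat h(n,\theta,T,g)\le -a\,A(T)$, whereas you re-run the triple-counting proof of Theorem~\ref{thm1} inline with the refined per-code bound, which is a more careful rendering of the same computation.
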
 
\begin{proof} It is easy to see that  $\hat h(n,\theta,T,g)\le -A(T)a$. Therefore,  Theorem \ref{thm1} yields  $$A(T)\le 2\frac{Q}{N}.$$ 

Note that 
$
NA(T)=2E(T),
$
where $E(T)$ denote the number of unordered pairs $(u,v)$, $u,v\in C$, with $u\cdot v\in T$. Then  $A(T)=2k/N$, where $k\in{\mathbb Z}$, $k\ge0$. This completes the proof.  
\end{proof}

Arguing as above, we can prove the following corollary. 

\begin{cor} \label{cor2}   Let $F\in {\mathcal F}(n,f_0,T,g,B,\theta)$. Let $a>0$.  Suppose  $T\subset[-1,\cos{\theta}]$ and  $g(t)\le a$ for all $t\in T$.  Then for every $(N,n,\theta)$ spherical code $C$  we have
$$
A(T)\ge \frac{2}{N}\left\lceil {R}\right\rceil, \quad R:=\frac{f_0N^2-F(1,1,1)-3(N-1)B}{6a}.  \eqno (6)
$$
\end{cor}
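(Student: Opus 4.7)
The argument mirrors that of Corollary \ref{cor1}, with the sign conventions swapped: the hypothesis $g(t)\le a$ with $a>0$ yields an upper bound on the positive contribution of triples to the cubic sum $S$, and combining this with the semidefinite lower bound on $S$ produces a \emph{lower} bound on $A(T)$ in place of an upper bound.

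Concretely, I fix an $(N,n,\theta)$ spherical code $C$ and form
$$S := \sum_{(c,c',c'')\in C^3} F(c\cdot c', c\cdot c'', c'\cdot c'').$$
From $F\in\bv(n,f_0)$ I retain the bound $S\ge N^3 f_0$. Running the triple-equality decomposition used in the proof of Theorem \ref{thm1} (diagonal, two-coincident, all-distinct triples) and invoking items (2) and (3) of the definition of ${\mathcal F}(n,f_0,T,g,B,\theta)$, I obtain
$$S \le NF(1,1,1) + 3N(N-1)B + 3N \sum_{c\in C}\,\sum_{\substack{c'\in C \\ c'\neq c}} g_T(c\cdot c').$$
Since $g_T$ vanishes outside $T$ and $g(t)\le a$ on $T$, the inner double sum is at most $a\cdot\sum_{c}|\{c'\ne c : c\cdot c'\in T\}| = a\,N\,A(T)$. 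Substituting and solving for $A(T)$ yields $A(T)\ge 2R/N$.

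To promote this to $A(T)\ge 2\lceil R\rceil /N$, I reuse the integrality observation from Corollary \ref{cor1}: writing $N\,A(T)=2E(T)$ with $E(T)\in\mathbb{Z}_{\ge 0}$, the bound $A(T)\ge 2R/N$ is equivalent to $E(T)\ge R$, which forces $E(T)\ge\lceil R\rceil$; if $R\le 0$ the conclusion is vacuous. The point that requires care is the triple-sum bookkeeping that produces the coefficient $3N$ in front of $\sum g_T(c\cdot c')$, since this is precisely what produces the denominator $6a$ in $R$; every other step is a sign-flipped transcription of the argument for Corollary \ref{cor1}.
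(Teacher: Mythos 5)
Your proposal is correct and matches the paper's intended argument: the paper proves Corollary \ref{cor2} by ``arguing as above,'' i.e.\ re-running the triple-sum decomposition of Theorem \ref{thm1} with the code-specific bound $\sum_{c}\sum_{c'\ne c} g_T(c\cdot c')\le aNA(T)$ (the sign-flipped analogue of Corollary \ref{cor1}) and then invoking the same integrality observation $NA(T)=2E(T)$ to replace $R$ by $\lceil R\rceil$. Your bookkeeping giving the coefficient $3N$ per ordered pair, hence the $6a$ in $R$, is exactly right, so there is nothing to add.
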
 

% Let $F\in {\mathcal F}(n,f_0,T,g,B,\theta)$. Suppose  $g$ achieves its maximum on  $T$ at $t\in T$.  Let $C$ be an $(N,n,\theta)$ spherical code. Since $\hat h(n,\theta,T,g)\le A(T)g(t)$, from Theorem \ref{thm1}  we have 
%$$
%A(T)\ge \frac{f_0N^2-F(1,1,1)-3(N-1)B}{3Ng(t)}. \eqno(6)
%$$

%\begin{cor} \label{cor2}  Let $F\in {\mathcal F}(n,f_0,T,g,B,\theta)$. Suppose  $g$ achieves its maximum on  $T$ at $t_0\in T$.  Then for every $(N,n,\theta)$ spherical code $C$  we have 
%$$
%A(T)\ge \frac{f_0N^2-F(1,1,1)-3(N-1)B}{3Ng(t_0)}. 
%$$
%\end{cor} 

%\begin{proof} It is easy to see that in Corollary \ref{cor1}: $\hat h(n,\theta,T,g)\le -A_ta$ and  in Corollary \ref{cor2} we have $\hat h(n,\theta,T,g)\le A(T)g(t_0)$. Since $A_t=2k/N$, $k=0, 1,\ldots$, Theorem \ref{thm1} yields (5) and (6). 
%\end{proof}

% \begin{thm}   \label{thm2}   Let 
 
% \end{thm} 

\medskip

\noindent {\bf Example 1.} Bannai and Sloane \cite{BS} proved the uniqueness of maximum kissing arrangements in dimensions 8 and 24. The main step of the uniqueness theorems is to show that the correspondent distance distribution is unique. Here we use corollaries from this section to prove this fact for dimension 8. We think that this approach can be useful for a proof of the uniqueness of maximum kissing arrangement in four dimensions and  other spherical codes. 

Let 
$$
g_0(t)=(2t-1)t^2(2t+1)^2(t+1), 
$$
$$
g_1(t)=(2t-1+a_1)\,t^2(2t+1)^2(t+1),  \quad g_2(t)=(2t-1)(t^2-a_2^2)(2t+1)^2(t+1),
$$
$$
g_3(t)=(2t-1)\,t^2\left((2t+1)^2-a_3^2\right)(t+1),  \quad g_4(t)=(2t-1)\,t^2(2t+1)^2(t+1-a_4), 
$$
where all $a_i>0$. 

Consider the expansion of $g_i$  in terms  of the Gegenbauer polynomials $G_k^{(8)}$: 
$$
g_i(t)=\sum\limits_{k=0}^6{c_{k,i}G_k^{(8)}(t)}.
$$  
It is known that the coefficients $c_{k,0}>0$ for all $k=0,\ldots, 6$ (see \cite{Lev2, OdS}, \cite[Ch.13]{CS}). We may assume that 
 $$
 c_{k,i}>0 \, \mbox{ for all }\, k \, \mbox{ and } \, i.
 $$  
Indeed, note that if $a_i=0$, then $g_i(t)=g_0(t)$. Therefore, if $a_i$ are small enough then the positivity of $c_{k,0}$  yield the positivity of $c_{k,i}$. 

Let 
$$F_i(x, y, z)=g_i(x)+g_i(y)+g_i(z), \quad B_i=g_i(1), \quad f_{0,i}=3c_{0,i}.$$ 
Since all $ c_{k,i}$ are positive, we have that all $F_i\in \bv(8, f_{0,i})$. 

In dimension 8 the maximum kissing arrangement is a $(240,8,\pi/3)$ spherical code. First we  apply Corollary \ref{cor1} with $F_0$. If $t\in [-1,0.5]$ and $t\ne -1, \pm 0.5, 0$, then $g_0(t)<0$. It is well known  (see \cite{Lev2, OdS}, \cite[Ch.13]{CS}) that 
$$
D:=g_0(1)-240c_{0,0}=0. 
$$ 
Then  $Q=120D/g_0(t)=0$.  Thus,
$$
 \mbox{if } t\in [-1,0.5], \, t\ne t_i,  \mbox{ then } A_t=0, \, \mbox{ where }  t_1:=0.5, \; t_2:=0, \; t_3:=-0.5, \; t_4:=-1.   \eqno (7) 
$$

Let 
$$
T_1:=[0.5-a_1/2,0.5], \; T_2:=[-a_2, a_2], \; T_3:= [-0.5-a_3,-0.5+a_3], \; T_4:=[-1,a_4-1].  
$$
It is clear that $g_i$ achieves its maximum on  $T_i$ at $t_i$ and (7) implies that $A(T_i)=A_{t_i}$.  

It can be proved that $F_i\in {\mathcal F}(8,f_{0,i},T_i,g_i,B_i,\pi/3)$. By Corollary \ref{cor2} we have $A(T_i)\ge P_i$, where $P_i$ denote the right side  of (6). $P_i$  can be found by the direct calculation.  We have $P_1=P_3=56$,  $P_2=126$ and $P_4=1$.  Therefore, $A([-1,0.5])\ge 239$. On the other side, $A([-1,0.5])= 239$. Hence the inequalities $A(T_i)\ge P_i$ are equalities. Thus
$$
A_{-1}=1, \; A_{-1/2}=A_{1/2}=56, \; A_0=126. 
$$
%Using (8) and arguing as in \cite{BS} this proof of uniqueness of the kissing arrangement of size 240 in dimension 8 can be completed. 

%%%%%%%%%%%%%%%%%%%

%%%%%%%%%%%%%%%%%%%%%

\section{SDP bound for distance graphs}

In this section we consider a version of Theorem \ref{thm1} for distance graphs of spherical codes.

\begin{defn}
 Let  $T \subset[-1,1)$. Let $C$  be a finite subset of\, ${\mathbb S}^{n-1}$. 
Denote by  $\dg(C,T)$ a graph with vertices in $C$ and edges $(c,c')$  with $c\cdot c'\in T$.  Denote by $E(C,T)$ the set of edges of $\dg(C,T)$. 

Let $g$ be a real function on  $T$. Define 
$$
H_g(C,T):=\sum\limits_{(c,c')\in E(C,T)} {g(c\cdot c'}).
$$
 \end{defn}

\begin{defn} Let $g$ be a real function on  $T\subset[-1,1)$. Let $G=(V,E)$ be a simple graph with $N$ vertices. Denote by $Q(G,n,\theta,T)$   the all of spherical codes $C$ such that $C\in Q(n,N,\theta,T)$ and $\dg(C,T)=G$. 
Define 
$$
\tau(G,n,\theta,T,g):=\sup\limits_{C\in Q(G,n,\theta,T)}{H_g(C,T)}
$$
subject to 
$$
\sum\limits_{c'\in C} {g_T(c\cdot c'})\le h_m(n,\theta,T,g), \, m=\deg(c) \, \mbox { in } \, \dg(C,T),  \; \mbox{ for all } \, c\in C. 
$$
 \end{defn}
 
 The following theorem can be proved by the same arguments as Theorem \ref{thm1}. 
 
 \begin{thm}   \label{thm2}   Let $F\in {\mathcal F}(n,f_0,T,g,B,\theta)$.  Let $G$ be a simple graph with $N$ vertices. Then an $(N,n,\theta)$ spherical code $C$ with $\dg(C,T)=G$ satisfies the following inequality
$$
N^2\le \frac{F(1,1,1)+3(N-1)\,B+6\tau(G,n,\theta,T,g)}{f_0}. \eqno (8)
$$
\end{thm}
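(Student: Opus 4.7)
The plan is to mimic the proof of Theorem~\ref{thm1} almost verbatim, with the single modification that the vertex-by-vertex bound via $\hat h$ is replaced by an edge-wise bound via $\tau$. First I would introduce the triple sum
$$
S:=\sum\limits_{(c,c',c'')\in C^3}F(c\cdot c',c\cdot c'',c'\cdot c'')
$$
and observe, exactly as before, that the positive semidefinite hypotheses built into $\bv(n,f_0)$ give $S\ge N^3 f_0$; this step is insensitive to the graph structure.

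Next I would split $C^3$ into three classes and bound each using the corresponding property of ${\mathcal F}(n,f_0,T,g,B,\theta)$. The $N$ diagonal triples $(c,c,c)$ contribute exactly $NF(1,1,1)$. On each of the $3N(N-1)$ triples with exactly two equal entries, symmetry of $F$ reduces the value to $F(x,x,1)$, where $x$ is the inner product between the repeated and the unique entries, and property~(2) bounds this by $B+2g_T(x)$; summing yields a constant part $3N(N-1)B$ and a $g_T$-part $6\sum_{c\ne c'}g_T(c\cdot c')$. On the $N(N-1)(N-2)$ triples with pairwise distinct entries, property~(3) gives the bound $g_T(c\cdot c')+g_T(c\cdot c'')+g_T(c'\cdot c'')$; symmetrising and using that each ordered pair with $c\ne c'$ completes to such a triple in $N-2$ ways, this contributes at most $3(N-2)\sum_{c\ne c'}g_T(c\cdot c')$.

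The two $g_T$-contributions combine to $3N\sum_{c\ne c'}g_T(c\cdot c')$. Since $g_T$ vanishes off the set $\{(c,c'):c\cdot c'\in T\}$, which is exactly the edge set of $\dg(C,T)=G$, the ordered-pair sum equals $2H_g(C,T)$. As $C\in Q(G,n,\theta,T)$, the definition of $\tau$ yields $H_g(C,T)\le \tau(G,n,\theta,T,g)$. Assembling,
$$
N^3 f_0\le S\le N F(1,1,1)+3N(N-1)B+6N\,\tau(G,n,\theta,T,g),
$$
and division by $N$ produces (8).

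The only genuinely new point, compared with the proof of Theorem~\ref{thm1}, is the shift from per-vertex to per-edge bookkeeping: the coefficient $6$ of $\tau$ in (8), as opposed to the coefficient $3N$ of $\hat h$ in (4), arises because the ordered-pair sum counts each edge of $G$ twice rather than applying a per-vertex bound $N$ times. I do not expect a real obstacle; note that the constraint $\sum_{c'\in C}g_T(c\cdot c')\le h_{\deg(c)}$ appearing in the definition of $\tau$ is automatic for any $C\in Q(G,n,\theta,T)$, so it plays no direct role in the argument and serves only as a device for computing or estimating $\tau$ in applications.
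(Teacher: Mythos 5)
Your proof is correct and follows essentially the same route as the paper: the paper itself only remarks that Theorem~\ref{thm2} "can be proved by the same arguments as Theorem~\ref{thm1}," and your argument is exactly that, with the per-vertex bound via $\hat h$ replaced by the per-edge sum $2H_g(C,T)\le 2\tau$, which is where the coefficient $6$ comes from. Your observation that the side constraint in the definition of $\tau$ is automatically satisfied (by the same neighbor-set argument used in the proof of Theorem~\ref{thm1}) correctly closes the only point the paper leaves implicit.
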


Actually, Theorem \ref{thm2} gives a stronger  bound  than Theorem \ref{thm1}.  It is clear that 
$$
2\tau\le N\hat h,  \quad \tau:=\tau(G,n,\theta,T,g), \; \hat h:=\hat h(n,\theta,T,g). \eqno (9)
$$
 Note that (8) coincides with (4) only if $2\tau = N\hat h$.  Therefore,  if $2\tau < N\hat h$ then Theorem \ref{thm2} gives a stronger stronger bound  than Theorem \ref{thm1}.  

\medskip 

%\noindent {\bf Example 2.}  Consider the case $\mu=2$. 
Let $\theta\le \pi/2$, $m=1,2,\ldots,n$ and 
$$
T_m:=[-1,a_m], \;  a_m\in \left(-\sqrt{(1+(m-1)\cos{\theta})/m},-\sqrt{(1+m\cos{\theta})/(m+1)}\right). 
$$
It is not hard  to prove that in this case  $\mu=A(n,\theta,T_m)=m$ (see \cite{BM},\cite[Theorem 3]{mus08a}). 

Let $C$ be an $(n, N, \theta)$ spherical code. Then $G=\dg(C,T_m)$ is a graph with vertices of degree at most m. Here we consider cases $m-1$ and $m=2$.

If $m=1$, then  $\hat h=h_1$. In this case $G=k_1K_1\cup k_2T_2$, in other words $G$ consists of $k_1$ isolated vertices and $k_2$ connected components with two vertices. Then $k_1+2k_2=N$. We obviously have 
$$
2\tau\le (N-k_1)\,h_1.  
$$  

Let $m=2$. Then  $\hat h=\max\{h_1,h_2\}$ and if $h_1>h_2$, we have $\hat h=h_1$. 

\begin{prop} Let  $G=\dg(C,T_2)$. The number of connected components in $G$  with $i$ vertices we denote by $k_i$, $i=1,2,3.$  If $h_1>h_2$, then 
$$
2\tau \le 2k_{2\,}h_1+(N-k_1-2k_2-k_3)\,h_2.  \eqno (10)
$$
\end{prop}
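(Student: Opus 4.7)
The plan is to extend the component-based argument used on the preceding page for the $m=1$ case, now exploiting the path structure that arises when the maximum degree equals $2$. Since every $c\in C$ has at most $\mu=A(n,\theta,T_2)=2$ neighbors in $G=\dg(C,T_2)$, each connected component of $G$ is a path or a cycle, and I would bound $2H_g(C,T_2)$ component by component.

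First, I would invoke the per-vertex inequality $\sum_{c'\sim c}g(c\cdot c')\le h_{\deg(c)}$, which holds because the neighbors of $c$ form a $(\deg(c),n,\theta)$ spherical code inside the anti-cap $\{x:c\cdot x\in T_2\}$. An isolated vertex contributes $0$ to the sum. For a single edge on vertices $u,v$, applying the constraint at $u$ gives $g(u\cdot v)\le h_1$, so this component contributes at most $2h_1$ to $2\tau$. For a path on three vertices with middle vertex $v$ and endpoints $u,w$, applying the constraint at $v$ alone gives $g(u\cdot v)+g(v\cdot w)\le h_2$, so the component contributes at most $2h_2$; the hypothesis $h_1>h_2$ is what makes this single-vertex estimate sharper than the sum over all three vertex constraints. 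Summing over components would then yield $2H_g(C,T_2)\le 2k_2h_1+2k_3h_2$, which is exactly (10) after using $N=k_1+2k_2+3k_3$ to rewrite $2k_3=N-k_1-2k_2-k_3$; passing to the supremum over $C\in Q(G,n,\theta,T_2)$ closes the argument.

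The hard part will be justifying the tacit assumption that $G$ contains only components of sizes $1$, $2$, and $3$. The triangle $C_3$ is ruled out because $\theta\le\pi/2$ forces $a_2<-1/2$, so three points with mutual inner products at most $a_2$ would satisfy $\|u+v+w\|^2=3+2(u\cdot v+u\cdot w+v\cdot w)\le 3+6a_2<0$, a contradiction. For longer paths or cycles with $k\ge 4$ vertices a more delicate positive-semidefinite argument is needed, and working out whether the prescribed interval for $a_2$ is narrow enough to preclude such components is the main nontrivial step; once this structural claim is in place the rest of the proof is a direct adaptation of the argument for Theorem~\ref{thm1} hinted at by the phrase ``Arguing as above.''
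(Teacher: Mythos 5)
Your accounting for components of size at most three coincides with the paper's, but your route hinges on a structural claim that is false: no admissible choice of $a_2$ precludes components of $\dg(C,T_2)$ with four or more vertices. For instance, take $\theta=\pi/2$, so $a_2\in(-\sqrt{1/2},-\sqrt{1/3})$, say $a_2=-0.6$. The Gram matrix
$$
\begin{pmatrix} 1 & -0.6 & 0 & 0\\ -0.6 & 1 & -0.6 & 0\\ 0 & -0.6 & 1 & -0.6\\ 0 & 0 & -0.6 & 1 \end{pmatrix}
$$
has positive leading principal minors ($1,\,0.64,\,0.28,\,0.0496$), hence is positive definite, so there exist four points of a $(4,n,\pi/2)$ code on ${\mathbb S}^{n-1}$, $n\ge 4$, whose consecutive inner products are $-0.6\in T_2$ while all other inner products equal $0\notin T_2$; their graph $\dg(C,T_2)$ is a path on four vertices. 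Thus the ``main nontrivial step'' you defer cannot be carried out, and your summation $2H_g\le 2k_2h_1+2k_3h_2$ does not cover all graphs to which the proposition applies --- note that the statement's term $N-k_1-2k_2-k_3$ (rather than $2k_3$) is precisely there to account for vertices lying outside components of size at most three.

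The paper's proof takes the other route: it excludes only triangles (by the same $\|u+v+w\|^2$ computation you give, phrased as edges having angular length greater than $2\pi/3$) and then \emph{bounds}, rather than excludes, the larger components, charging each vertex in a component with at least four vertices with $h_2$, i.e. $2\tau\le 2k_2h_1+2k_3h_2+(N-k_1-2k_2-3k_3)h_2$, which rearranges to (10). For cycles of length at least four this charge follows at once from the degree-two vertex constraints. To salvage your argument you should switch to this bounding strategy; be aware, though, that for path components with at least four vertices the two degree-one endpoints only satisfy the $h_1$ constraint, with $h_1>h_2$, so justifying the $h_2$-per-vertex charge there requires an additional argument that the paper's one-line proof does not spell out.
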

\begin{proof}
Since the angular length of edges in $G$ is greater than $2\pi/3$, $G$ doesn't contain triangles.   That yields the contribution in $\tau$ of any connected components  with 3 vertices is at most $h_2$.  Thus, 
$$
2\tau \le 2k_{2\,}h_1+2k_{3\,}h_2+(N-k_1-2k_2-3k_3)\,h_2=2k_{2\,}h_1+(N-k_1-2k_2-k_3)\,h_2.
$$
\end{proof}

This proposition shows that for $m=2$ with $h_1>h_2$ inequality (9) becomes an equality  if only and if $N=2k_2$ and $G=k_2K_2$, i.e. $G$ is the disjoint union of $k_2$ edges.

 % and in particular to prove the conjecture on the uniqueness of maximum kissing arrangements in 4 dimensions. 

%\begin{cor} 
%\end{cor}

%\section{SDP bound for contact graphs}

\section{Concluding Remarks}

In conclusion, we outline some applications of Theorems 1 and 2 and their generalizations.

\subsection{Towards a proof of the uniqueness conjecture}
We know that $k(4)=24$ \cite{mus08a}.  However, in dimension 4 the uniqueness of
the maximal kissing arrangement is conjectured to be the 24--cell  but not yet proven. Equivalently, the uniqueness conjecture is the following: 
\smallskip

{\em Let $C$ be a $(24,4,\pi/3)$ spherical code. Then} 
$$
A_{-1}=A_1=1, \; A_{-1/2}=A_{1/2}=8, \; A_0=6, \;  A_t=0, \, t\ne \pm 1, \pm 1/2, 0.  \eqno (11)
$$

Note that in this dimension the equality $A_{-1}=1$ yields (11) \cite{Boy}. 

\smallskip

Denote by $s_d(n)$ the optimal SDP bound on $k(n)$ given by (3) with $\deg(F)=d$ (see \cite{mv10}).   
In the following table it is shown that this minimization problem is a semidefinite program and that every upper bound on $s_d(4)$ provides an upper bound for the kissing number in dimension 4. 

\begin{itemize}

\item $s_7(4) <24.5797$  -- Bachoc \&  Vallentin \cite{bac08a};

\item $s_{11}(4) < 24.10550859$ --   Mittelmann \& Vallentin \cite{mv10};
 
\item $s_{12}(4)< 24.09098111$ \cite{mv10};

\item $s_{13}(4)< 24.07519774$ \cite{mv10};

\item $s_{14}(4) <24.06628391$ \cite{mv10};

\item $s_{15}(4) <24.062758$ -- Machado \&  de Oliveira Filho \cite{mac16};
 
\item $s_{16}(4) <24.056903$  \cite{mac16}.

\end{itemize}

This table show that $s_d$ with $d>12$ is relatively close to 24, $s_d-24<2/N=1/12$.  We think that our approach which is based on Corollaries \ref{cor1} and \ref{cor2}  (see  Example 1) can help to prove (11). Perhaps, using Proposition 1 and its extensions can be proved that $A_{-1}=1$. 

%Let $T=[-1,-\cos{\phi}]$. For an SDP program we can define $g(x)=a_0+a_1x+\ldots+a_kx^k$, where coefficients $a_0,a_1,\ldots, a_k$ are variables.  Moreover, this polynomial has to be a decreasing function on $T$, i.e. $g'(x)<0$. Perhaps, to improve an SDP program we have to add one more parameter $z>0$, an inequality  $g(-1)\ge z$. 

%Therefore we have parameters: $n,t,\tau,d,k$ and variables $F_i, i=0,\ldots,d$, $c_0, B$ and $a_j, j=0,\ldots,k$.

%It is an interesting  question can Theorem \ref{thm1} help to improve bounds for  kissing numbers?  The case $n=4$ is especially interesting, the uniqueness of maximum kissing arrangements in 4 dimensions is still an open problem.  We think that in this case the following equalities can be added: 
%$$
%F(-1,-1/2,1/2)=g(-1), \quad F(-1,0,0)=g(-1).  
%$$

\subsection{Towards a proof of the 24-cell conjecture}

The sphere packing problem asks for the densest packing of ${\mathbb R}^n$ with unit balls. 
In four dimensions, the old conjecture states that a sphere packing is densest when spheres are centered at the points of lattice $D_4$, i.e.  the highest density  $\Delta_4$  is $\pi^2/16$, or equivalently the highest  center density is $\delta_4=\Delta_4/B_4=1/8$.  
For lattice packings, this conjecture was proved by Korkin and Zolatarev in 1872. 
Currently, for general sphere packings the best known upper bound for $\delta_4$ is $0.130587$, a slight improvement on the Cohn--Elkies bound  of $\delta_4<0.13126$, but still nowhere near sharp. 

In \cite{musin2018} we considered the following conjecture:
\smallskip 

\noindent {\bf The 24--cell conjecture.} {\it Consider the Voronoi decomposition of any given packing $P$ of unit spheres in ${\mathbb R}^4$. The minimal volume of any cell in the resulting Voronoi decomposition of $P$ is at least as large as the volume of a regular 24--cell circumscribed to a unit sphere.}

\smallskip
\noindent Note that a proof of the 24-cell conjecture also proves that $D_4$ is the densest sphere packing in 4 dimensions. 

In  \cite[Sect. 4]{mus14} and  \cite[3.3]{musin2018} we considered polynomials $H_k$ that are positive--definite in  ${\mathbb R}^n$. Actually, $H_k$ are polynomials that extend the Bachoc--Vallentin polynomials $S_k$. It is an interesting problem to find generalizations of Theorems \ref{thm1} and \ref{thm2} for sphere packings in  ${\mathbb R}^n$. Perhaps, these bounds for $n=4$ can help to prove the 24--cell conjecture.

\subsection{Extension of the SDP bound for codes in spherical caps}

In \cite{BM} we considered geometric and linear programming bounds on codes in spherical caps.  Bachoc and Vallentin \cite{bac09b} applied the semidefinite programming approach to obtain upper bounds on $A(n,\theta,\phi)$. They compute  upper bounds for the one--sided kissing number $B(n)$ in several dimensions $n$. In particular they proved that $B(8)=183$. It is an interesting problem to extend Theorems 1 and 2 for codes in spherical caps and to prove that $B(5)=32$ and $B(24)=144855$.

\subsection{SDP bound for contact graphs and Tammes' problem}

%Let $g$ achieves its maximum on  $T\subset[-1,1)$ at $t_0\in T$.  Suppose $F$ and $B$ are as in Theorem \ref{thm2}. Then for every $(n, N, \theta)$ spherical code $C$ we have
%$$
%|E(C,T)|\ge \frac{f_0N^2-F(1,1,1)-3(N-1)B}{6Ng(t_0)}
%$$

Let $g_a$ be a monotonically increasing function on  $T_a=[s-a,s]$, $s:=\cos{\theta}$. Suppose $F$ is as in Corollary \ref{cor2}. Then for every $(N,n,\theta)$ spherical code $C$ we have 
$$
|E(C,T_a)|=\frac{1}{2}A(T_a)\ge P_a:=\frac{\left\lceil {R_a}\right\rceil}{N}, \quad R_a:=\frac{f_0N^2-F_a(1,1,1)-3(N-1)B_a}{6g_a(s)}. \eqno (12)
$$

%\frac{f_0N^2-F(1,1,1)-3(N-1)B}{6Ng_a(s)}.

Note that $E(C,T_0)$ is the set of edges of the contact graph of $C$. Then using small $a$ we can find lower bounds for  $|E(C,T_0)|$. Moreover, if $P_a$ approaches the limit $P_0$ as $a$ approaches 0 then $|E(C,T_0)|\ge P_0$. 

\medskip

The following problem was first asked by the Dutch botanist Tammes in 1930:\\
 {\em Find the largest angular separation $\theta$ of a spherical code $C$ in ${\mathbb S}^2$ of cardinality $N$.} \\In other words, \\ {\it How are $N$ congruent, non-overlapping circles distributed on the sphere when the common radius of the circles is as large as possible?}
 
 The Tammes problem is presently solved for only a few values of $N$: for $N=3,4,6,12$ by L. Fejes T\'oth; for $N=5,7,8,9$ by Sch\"utte and van der Waerden;  for $N=10,11$ by Danzer; for $N=24$ by Robinson.; and  for $N=13, 14$ by Musin \& Tarasov \cite{mus12a, MTT14}.

The computer-assisted solution of Tammes' problem for $N=13$ and $N=14$ consists of three parts: 
(i) creating the list $L_N$ of all planar graphs with $N$ vertices that satisfy the conditions of \cite[Proposition 3.1]{MTT14}; 
(ii) using linear approximations and linear programming to remove from the list $L_N$ all graphs that do not satisfy the known geometric properties of the maximal contact graphs \cite[Proposition 3.2]{MTT14}; 
(iii) proving that among the remaining graphs in $L_N$ only one is maximal. 

In fact, the list $L_N$ consists of a huge number of graphs. (For $N=13$ it is about $10^8$ graphs.) We think that the lower bound on the number of edges (12) can essentially reduce the number of graphs in $L_N$.

\subsection{Generalization of the $k$--point SDP bound for spherical codes}

In \cite{mus14} we invented  the $k$--point SDP bound for spherical codes. Note that for $k=2$ that is the classical Delsarte bound.  The 3--point SDP bound was first considered by Bachoc and Vallentin \cite{bac08a}. Recently, this method  with $k=4,5,6$ was apply for upper bounds of the maximum number of equiangular lines in $n$ dimensions \cite{LMOV}. It is an interesting to find generalizations of results in this paper using  the $k$--point SDP bounds and apply these bounds for $s$--distance sets and equiangular lines. 

%\subsection{Spherical s--distance sets}

%\subsection{Stability of optimal spherical codes} 

\medskip

\medskip

\medskip

\medskip

\medskip

\medskip

O. R. Musin, School of Mathematical and Statistical Sciences, University of Texas Rio Grande Valley,  One West University Boulevard, Brownsville, TX, 78520.

 {\it E-mail address:} oleg.musin@utrgv.edu

\end{document}